\theoremstyle{plain}
\newtheorem{thm}{Theorem}
\newtheorem{lem}[thm]{Lemma}
\newtheorem{cor}[thm]{Corollary}
\newtheorem*{OreBrooks}{Ore Version of Brooks' Theorem}
\newtheorem*{BrooksTheorem}{Brooks' Theorem}
\newtheorem*{MainTheorem}{Main Theorem}
\theoremstyle{definition}
\newtheorem{defn}{Definition}
\theoremstyle{remark}
\newtheorem*{observation}{Observation}
\newtheorem*{TemptingThought}{Tempting Thought}
\newcommand{\fancy}[1]{\mathcal{#1}}
\newcommand{\card}[1]{\left|#1\right|}
\newcommand{\size}[1]{\left\Vert#1\right\Vert}
\newcommand{\ceil}[1]{\left\lceil#1\right\rceil}
\newcommand{\floor}[1]{\left\lfloor#1\right\rfloor}
\newcommand{\parens}[1]{\left( #1 \right)}
\title{An improvement on Brooks' Theorem}
\author{Landon Rabern}
\begin{document}
\begin{abstract}
We prove that $\chi(G) \leq \max \left\{\omega(G), \Delta_2(G), \frac{5}{6}(\Delta(G) + 1)\right\}$ for every graph $G$ with $\Delta(G) \geq 3$.  Here $\Delta_2$ is the parameter introduced by Stacho that gives the largest degree that a vertex $v$ can have subject to the condition that $v$ is adjacent to a vertex whose degree is at least as large as its own.  This upper bound generalizes both Brooks' Theorem and the Ore-degree version of Brooks' Theorem.  
\end{abstract}
\maketitle
\section{Introduction}
Brooks' Theorem \cite{Brooks} gives an upper bound on a graph's chromatic number in terms of its maximum degree and clique number.  

\begin{BrooksTheorem}
Every graph with $\Delta \geq 3$ satisfies $\chi \leq \max\{\omega, \Delta\}$.
\end{BrooksTheorem}

In \cite{Stacho} Stacho introduced the graph parameter $\Delta_2$ as the largest degree that a vertex $v$ can have subject to the condition that $v$ is adjacent to a vertex whose degree is at least as large as its own.  He proved that for any graph $G$, the bound $\chi(G) \leq \Delta_2(G) + 1$ holds. Moreover, he proved that for any fixed $t \geq 3$, the problem of determining whether or not $\chi(G) \leq \Delta_2(G)$ for graphs with $\Delta_2(G) = t$ is \emph{NP}-complete.  It is tempting to think that an analogue of Brooks' Theorem like the following holds for $\Delta_2$.

\begin{TemptingThought}
There exists $t$ such that every graph with $\Delta_2 \geq t$ satisfies $\chi \leq \max \{\omega, \Delta_2\}$.
\end{TemptingThought}

Unfortunately, using Lov\'{a}sz's $\vartheta$ parameter \cite{Lovasz} which can be computed in polynomial time and has the property that $\omega(G) \leq \vartheta(\overline{G}) \leq \chi(G)$ we see immediately that if \emph{P} $\neq$ \emph{NP}, then the tempting thought cannot hold for any $t$.  In the final section we give a construction showing that this is indeed the case whether or not \emph{P} $\neq$ \emph{NP}. However, if we limit how far from $\Delta + 1$ our upper bound can stray, we can get a generalization of Brooks' Theorem involving $\Delta_2$.

\begin{MainTheorem}
Every graph with $\Delta \geq 3$ satisfies \[\chi \leq \max \left\{\omega, \Delta_2, \frac{5}{6}(\Delta + 1)\right\}.\]
\end{MainTheorem}

In addition to generalizing Brooks' Theorem, this also generalizes the Ore-degree version of Brooks' Theorem as introduced by Kierstead and Kostochka in \cite{KK} and improved in \cite{DeltaCritical}.

\begin{defn}
The \emph{Ore-degree} of an edge $xy$ in a graph $G$ is $\theta(xy) = d(x) + d(y)$.  The \emph{Ore-degree} of a graph $G$ is $\theta(G) = \max_{xy \in E(G)}\theta(xy)$.
\end{defn}

Note that $\Delta_2 \leq \left\lfloor\frac{\theta}{2} \right \rfloor \leq \Delta$.  In \cite{DeltaCritical} the following bound was proved. The graph  $O_5$ exhibited in \cite{KK} shows that the $\theta \geq 10$ condition is best possible.

\begin{OreBrooks}
Every graph with $\theta \geq 10$ satisfies $\chi \leq \max \left\{\omega, \left\lfloor\frac{\theta}{2} \right \rfloor\right\}$.
\end{OreBrooks}
\begin{proof}
Suppose the theorem is false and choose a counterexample $G$ minimizing $\card{G}$.  Plainly, $G$ is vertex critical.  Thus $\delta(G) \geq \chi(G) - 1$.  In particular, $\theta(G) \geq \delta(G) + \Delta(G) \geq \chi(G) + \Delta(G) - 1$.  Hence $\Delta(G) \leq \chi(G)$.  Applying the Main Theorem, we conclude $\Delta(G) \leq \chi(G) \leq \frac{5}{6}(\Delta(G) + 1)$ and hence $\Delta(G) \leq 5$.  But then $\theta(G) = 10$ and we must have $\chi(G) \geq 6$.  Now applying Brooks' Theorem gets the desired contradiction.
\end{proof}

\noindent In fact, a similar proof shows that a whole spectrum of generalizations hold.

\begin{defn}
For $0 \leq \epsilon \leq 1$, define $\Delta_\epsilon(G)$ as

\[\left\lfloor\max_{xy \in E(G)} (1 - \epsilon)\min\{d(x), d(y)\} + \epsilon\max\{d(x), d(y)\}\right\rfloor.\]
\end{defn}

\noindent  Note that $\Delta_1 = \Delta$, $\Delta_{\frac12} = \left\lfloor\frac{\theta}{2} \right\rfloor$ and $\Delta_0 = \Delta_2$.

\begin{thm}\label{TheoremR}
For every $0 < \epsilon \leq 1$, there exists $t_\epsilon$ such that every graph with $\Delta_\epsilon \geq t_\epsilon$ satisfies 

\[\chi \leq \max\{\omega, \Delta_\epsilon\}.\]
\end{thm}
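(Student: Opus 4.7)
The plan is to imitate the short deduction used just above for the Ore-degree version, with the Main Theorem again doing the heavy lifting and $\Delta_\epsilon$ playing the role of $\left\lfloor \theta/2 \right\rfloor$. Fix $0 < \epsilon \leq 1$ and a value $t_\epsilon$ to be chosen at the end, and suppose toward contradiction that $G$ is a counterexample minimizing $\card{G}$. Minimality makes $G$ vertex-critical, so $\delta(G) \geq \chi(G) - 1$; combined with the assumption $\chi(G) > \max\{\omega(G), \Delta_\epsilon(G)\}$, this yields $\delta(G) \geq \Delta_\epsilon(G)$.

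Next, I would compare $\Delta$ with $\Delta_\epsilon$ using a single edge. Let $v$ be a vertex of maximum degree and $u$ any neighbor of $v$; applying the definition of $\Delta_\epsilon$ to the edge $uv$ gives
\[
\Delta_\epsilon(G) \geq (1-\epsilon)\delta(G) + \epsilon \Delta(G) - 1 \geq (1-\epsilon)\Delta_\epsilon(G) + \epsilon \Delta(G) - 1,
\]
which rearranges to $\Delta(G) \leq \Delta_\epsilon(G) + 1/\epsilon$. Taking $t_\epsilon \geq 3$ ensures $\Delta(G) \geq 3$, so the Main Theorem is applicable.

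Now combine the pieces. Since $(1-\epsilon)a + \epsilon b$ is non-decreasing in $\epsilon$ whenever $a \leq b$, the parameter $\Delta_\epsilon$ is monotone in $\epsilon$, and in particular $\Delta_2(G) = \Delta_0(G) \leq \Delta_\epsilon(G)$. Hence the Main Theorem together with $\chi > \omega$ and $\chi > \Delta_\epsilon$ forces
\[
\chi(G) \leq \frac{5}{6}(\Delta(G)+1) \leq \frac{5}{6}\left(\Delta_\epsilon(G) + \frac{1}{\epsilon} + 1\right).
\]
Paired with $\chi(G) \geq \Delta_\epsilon(G) + 1$, this yields $\Delta_\epsilon(G) \leq 5/\epsilon - 1$, contradicting $\Delta_\epsilon(G) \geq t_\epsilon$ as soon as $t_\epsilon \geq \left\lceil 5/\epsilon \right\rceil$.

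There is no real obstacle here: the Main Theorem absorbs the chromatic work, the $\Delta_2$ term is killed by monotonicity in $\epsilon$, and the edge incident to a max-degree vertex converts the parameter comparison into an additive $1/\epsilon$ loss on $\Delta$. The only subtlety is the floor in the definition of $\Delta_\epsilon$, which costs at most an additive $1$ and is harmless in the final linear inequality.
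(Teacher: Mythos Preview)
Your argument is correct and is exactly the ``similar proof'' the paper alludes to: reduce to a vertex-critical counterexample, use an edge at a maximum-degree vertex together with $\delta \geq \chi - 1 \geq \Delta_\epsilon$ to bound $\Delta \leq \Delta_\epsilon + 1/\epsilon$, then feed this into the Main Theorem (killing the $\omega$ and $\Delta_2 = \Delta_0 \leq \Delta_\epsilon$ terms) to force $\Delta_\epsilon \leq 5/\epsilon - 1$. Your choice $t_\epsilon = \lceil 5/\epsilon \rceil$ also recovers the paper's stated bound $t_\epsilon < 6/\epsilon$.
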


\noindent It would be interesting to determine, for each $\epsilon$, the smallest $t_\epsilon$ that works in Theorem \ref{TheoremR}. In the final section we give a simple construction showing that $t_\epsilon \geq 1 + \frac{2}{\epsilon}$.  The Main Theorem implies $t_\epsilon < \frac{6}{\epsilon}$.

\section{Rephrasing the problem}

\begin{defn}
For a graph $G$ and $r \geq 0$, let $G^{\geq r}$ be the subgraph of $G$ induced on the vertices of degree at least $r$ in $G$.  Let $\mathcal{H}(G) = G^{\geq \chi(G)}$.
\end{defn}

\noindent We can rewrite the definition of $\Delta_2$ as

\[\Delta_2(G) = \min \left\{r \geq 0 \mid G^{\geq r} \text{ is edgeless} \right\} - 1.\]

\noindent In particular we have the following.
\begin{observation}
For any graph $G$, $\chi(G) > \Delta_2(G)$ if and only if $\mathcal{H}(G)$ is edgeless.
\end{observation}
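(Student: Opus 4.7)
The observation is really just an unpacking of definitions, so the proof plan is correspondingly short. The key identity to leverage is the rephrased definition
\[\Delta_2(G) + 1 = \min\set{r \geq 0 \mid G^{\geq r} \text{ is edgeless}},\]
together with the monotonicity property that $r \leq s$ implies $V(G^{\geq s}) \subseteq V(G^{\geq r})$, and hence $G^{\geq s}$ is an induced subgraph of $G^{\geq r}$. In particular, if $G^{\geq r}$ is edgeless for some $r$, then $G^{\geq s}$ is edgeless for every $s \geq r$.

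For the forward direction, I would assume $\chi(G) > \Delta_2(G)$, which gives $\chi(G) \geq \Delta_2(G) + 1$. By the rephrased definition, $G^{\geq \Delta_2(G)+1}$ is edgeless, and by the monotonicity observation above, $\mathcal{H}(G) = G^{\geq \chi(G)}$ is also edgeless.

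For the reverse direction, assume $\mathcal{H}(G) = G^{\geq \chi(G)}$ is edgeless. Then $\chi(G)$ lies in the set $\setb{r}{\IN}{G^{\geq r} \text{ is edgeless}}$, so $\chi(G) \geq \Delta_2(G) + 1$, giving $\chi(G) > \Delta_2(G)$.

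There is no real obstacle here; the only thing to be a little careful about is verifying the rephrased formula for $\Delta_2$ (which the excerpt states without proof): a vertex $v$ of degree $d$ has a neighbor of degree at least $d$ precisely when $v$ is incident to an edge of $G^{\geq d}$, so the largest such $d$ is one less than the smallest threshold at which $G^{\geq r}$ becomes edgeless. Once that is accepted, both implications are immediate.
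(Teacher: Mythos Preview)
Your proof is correct and matches the paper's treatment: the paper states the observation as immediate from the rephrased formula $\Delta_2(G)+1=\min\{r\geq 0\mid G^{\geq r}\text{ is edgeless}\}$ and gives no further argument, and your write-up simply makes explicit the monotonicity step (that the set of thresholds $r$ for which $G^{\geq r}$ is edgeless is upward closed) needed to read off both implications.
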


\noindent This observation will allow us to prove our upper bound without worrying about $\Delta_2$.

\section{Proving the bound}
We will use part of an algorithm of Mozhan \cite{Mozhan}.  The following is a generalization of his main lemma.

\begin{defn}
Let $G$ be a graph containing at least one critical vertex.  Let $a \geq 1$ and $r_1, \ldots, r_a$ be such that $1 + \sum_i r_i = \chi(G)$. By a \emph{$(r_1, \ldots, r_a)$-partitioned coloring} of $G$ we mean a proper coloring of $G$ of the form
\[\left\{\{x\}, L_{11}, L_{12}, \ldots, L_{1r_1}, L_{21}, L_{22}, \ldots, L_{2r_2}, \ldots, L_{a1}, L_{a2}, \ldots, L_{ar_a}\right\}.\]

\noindent Here $\{x\}$ is a singleton color class and each $L_{ij}$ is a color class.
\end{defn}

\begin{lem}\label{mozlemma}
Let $G$ be a graph containing at least one critical vertex.   Let $a \geq 1$ and $r_1, \ldots, r_a$ be such that $1 + \sum_i r_i = \chi(G)$. Of all $(r_1, \ldots, r_a)$-partitioned colorings of $G$ pick one (call it $\pi$) minimizing

\[\sum_{i = 1}^a \size{G\left[\bigcup_{j = 1}^{r_i} L_{ij}\right]}.\]

\noindent Remember that $\{x\}$ is a singleton color class in the coloring. Put $U_i = \bigcup_{j = 1}^{r_i} L_{ij}$ and let $Z_i(x)$ be the component of $x$ in $G[\{x\} \cup U_i]$.  If $d_{Z_i(x)}(x) = r_i$, then $Z_i(x)$ is complete if $r_i \geq 3$ and $Z_i(x)$ is an odd cycle if $r_i = 2$.
\end{lem}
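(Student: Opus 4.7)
The plan is to analyze $Z_i(x)$ by combining the minimality of $\pi$ with Brooks' theorem, via a ``simple swap'' that moves the singleton from $x$ to one of its neighbors. The ultimate goal is to show $Z_i(x)$ is $r_i$-regular with $\chi(Z_i(x)) = r_i + 1$, after which Brooks' theorem forces the conclusion.

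First I would show $\chi(Z_i(x)) = r_i + 1$. The upper bound is immediate from $\pi$. For the lower bound, if $\chi(Z_i(x)) \leq r_i$ then we could recolor $Z_i(x)$ using just the $r_i$ colors $L_{i1}, \ldots, L_{ir_i}$; since $Z_i(x)$ is a component of $G[\{x\} \cup U_i]$ no conflicts are introduced outside, and we obtain a proper coloring of $G$ using $\chi(G) - 1$ colors, a contradiction. Together with $d_{Z_i(x)}(x) = r_i$, this also forces $x$ to have exactly one neighbor in each $L_{ij} \cap V(Z_i(x))$: otherwise $x$ could be recolored into a missing class, again reducing $\chi(Z_i(x))$ to $r_i$. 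Call this unique neighbor $y_j$.

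Next I would perform the simple swap. The coloring $\pi_j$ with $\{y_j\}$ as singleton and $(L_{ij} \setminus \{y_j\}) \cup \{x\}$ in place of $L_{ij}$ is a valid $(r_1, \ldots, r_a)$-partitioned coloring. A direct computation gives $\size{G[U_i^{\pi_j}]} - \size{G[U_i]} = (r_i - 1) - d_{U_i}(y_j)$, so minimality of $\pi$ forces $d_{Z_i(x)}(y_j) \leq r_i$. The same computation works for any $v \in V(Z_i(x))$ with $\chi(Z_i(x) - v) \leq r_i$ (call such $v$ \emph{singleton-eligible}): a partitioned coloring with $v$ as singleton exists, and minimality yields $d_{Z_i(x)}(v) \leq r_i$.

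The main obstacle is extending this degree bound to every vertex of $Z_i(x)$, i.e.\ concluding $\Delta(Z_i(x)) \leq r_i$. My plan is to iterate the simple swap: whenever $d_{Z_i(x)}(y_j) = r_i$, the swap preserves the minimum objective value and the new singleton $y_j$ still has full degree $r_i$ in its component $Z_i(y_j) = Z_i(x)$, so the whole setup applies to $\pi_j$ with $y_j$ in place of $x$. Iterating along paths in the connected graph $Z_i(x)$ propagates the bound everywhere. The delicate point is handling intermediate $y_j$ with $d_{Z_i(x)}(y_j) < r_i$, where $\pi_j$ is no longer a minimum; here I would fall back on proving singleton-eligibility directly and invoking Brooks on $Z_i(x) - v$ to produce an improving partitioned coloring whenever $d_{Z_i(x)}(v) > r_i$. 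Once $\Delta(Z_i(x)) \leq r_i$ is in hand, Brooks' theorem applied to $Z_i(x)$, which has $\chi(Z_i(x)) = r_i + 1 > \Delta(Z_i(x))$, forces $Z_i(x)$ to be complete, hence $K_{r_i + 1}$, or an odd cycle, which requires $r_i = 2$, matching the lemma's conclusion.
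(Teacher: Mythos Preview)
Your overall strategy matches the paper's: establish $\chi(Z_i(x)) = r_i + 1$ and $\Delta(Z_i(x)) \leq r_i$, then invoke Brooks. The paper reaches $\Delta(Z_i(x)) \leq r_i$ via a single rotation (a ``generalized Kempe chain'') along a shortest path from $x$ to a nearest vertex $y$ of degree exceeding $r_i$; your iterated simple swaps are a step-by-step version of that same rotation.

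The gap is in your handling of the ``delicate point''. Your proposed fallback---proving singleton-eligibility of a high-degree $v$ by applying Brooks to $Z_i(x)-v$---is circular: Brooks would require a bound on $\Delta(Z_i(x)-v)$, which is precisely what you are trying to establish. Fortunately the delicate case is vacuous, and you already have the tool to see it. Once you swap to make $y_j$ the singleton, the resulting coloring $\pi_j$ properly colors $Z_i(x)-y_j$ with the $r_i$ colors of $U_i$. If $d_{Z_i(x)}(y_j) < r_i$, then $y_j$ misses one of these $r_i$ classes and can be absorbed into it, giving $\chi(Z_i(x)) \leq r_i$ and contradicting what you proved in your first paragraph. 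Thus every vertex you swap to has degree exactly $r_i$, each swap preserves minimality, and by connectivity of $Z_i(x)$ you reach every vertex; so $Z_i(x)$ is $r_i$-regular and Brooks finishes. With this observation your argument is complete and coincides with the paper's, which hides the same point behind the word ``plainly''.
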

\begin{proof}
Let $1 \leq i \leq a$ such that $d_{Z_i(x)}(x) = r_i$. Put $Z_i = Z_i(x)$.

First suppose that $\Delta(Z_i) > r_i$.  Take $y \in V(Z_i)$ with $d_{Z_i}(y) > r_i$ closest to $x$ and let $x_1x_2\cdots x_t$ be a shortest $x-y$ path in $Z_i$.  Plainly, for $k < t$, each $x_k$ hits exactly one vertex in each color class besides its own.  Thus we may recolor $x_k$ with $\pi(x_{k + 1})$ for $k < t$ and $x_t$ with $\pi(x_1)$ to produce a new $\chi(G)$-coloring of $G$ (this can be seen as a generalized Kempe chain).  But we've moved a vertex ($x_t$) of degree $r_i + 1$ out of $U_i$ while moving in a vertex ($x_1$) of degree $r_i$ violating the minimality condition on $\pi$.  This is a contradiction.

Thus $\Delta(Z_i) \leq r_i$.  But $\chi(Z_i) = r_i + 1$, so Brooks' Theorem implies that $Z_i$ is complete if $r_i \geq 3$ and $Z_i$ is an odd cycle if $r_i = 2$.
\end{proof}

\begin{defn}
We call $v \in V(G)$ \emph{low} if $d(v) = \chi(G) - 1$ and \emph{high} otherwise.
\end{defn}

Note that in Lemma \ref{mozlemma}, if $d_{Z_i(x)}(x) = r_i$ then we can \emph{swap} $x$ with any other $y \in Z_i(x)$ by changing $\pi$ so that $x$ is colored with $\pi(y)$ and $y$ is colored with $\pi(x)$ to get another minimal $\chi(G)$-coloring of $G$. 

\begin{lem}\label{JoinedLows}
Assume the same setup as Lemma \ref{mozlemma} and that $x$ is low. If $i \neq j$ such that $r_i \geq r_j \geq 3$ and a low vertex $w \in U_i \cap N(x)$ is adjacent to a low vertex $z \in U_j \cap N(x)$, then the low vertices in $(U_i \cup U_j) \cap N(x)$ are all universal in
$G[(U_i \cup U_j) \cap N(x)]$. 
\end{lem}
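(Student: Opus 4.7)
The plan is to combine Lemma \ref{mozlemma} with the swap operation that promotes a low neighbor of $x$ to the singleton color class.

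Since $x$ is low with $d(x) = \chi(G) - 1 = \sum_k r_k$ and $\pi$ uses $\chi(G)$ colors, $x$ must have at least one neighbor in each of the $\sum_k r_k$ non-singleton color classes; otherwise $\{x\}$ could be absorbed into a missed class, yielding a $(\chi(G)-1)$-coloring of $G$. Hence $x$ has exactly one neighbor in each class, so $d_{Z_k(x)}(x) = r_k$ for all $k$, and Lemma \ref{mozlemma} forces $Z_i(x)$ and $Z_j(x)$ to be complete graphs on $r_i + 1$ and $r_j + 1$ vertices (using $r_i, r_j \geq 3$). Writing $A_i \DefinedAs N(x) \cap U_i$ and $A_j \DefinedAs N(x) \cap U_j$, both sets are cliques meeting each color class of $U_i$ and $U_j$ in exactly one vertex, and $(U_i \cup U_j) \cap N(x) = A_i \sqcup A_j$.

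For any low $v \in A_i$, swapping $x$ with $v$ produces another minimum partitioned coloring $\pi_v$ with singleton $\{v\}$: since $x$ and $v$ play symmetric roles inside the clique $Z_i(x)$, the swap preserves $\sum_k \size{G[U_k]}$. Applying the preceding paragraph to $\pi_v$ shows that $N(v) \cap U_j$ is again an $r_j$-clique meeting each class $L_{jm}$ in exactly one vertex. The lemma thus reduces to proving $N(v) \cap U_j = A_j$ for every low $v \in A_i$; the statement for low $v \in A_j$ follows by symmetry of $i$ and $j$.

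The hypothesis $wz \in E(G)$ with $w \in A_i$ and $z \in A_j$ both low anchors this equality for $v = w$ in the class $L_{j m_z}$, since $z$ is the unique element of both $N(w) \cap L_{j m_z}$ and $A_j \cap L_{j m_z}$. I expect the remaining equalities in other classes $L_{jm}$, and the extension from $v = w$ to an arbitrary low $v \in A_i$, to follow from a Kempe-chain argument inside the bipartite subgraph $G[L_{i k_v} \cup L_{jm}]$: a discrepancy $u' \neq u$ between $N(v) \cap L_{jm}$ and $A_j \cap L_{jm}$ would let us recolor an appropriate bichromatic component to produce a partitioned coloring with strictly smaller $\sum_k \size{G[U_k]}$, contradicting the minimality of $\pi$. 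The main obstacle is executing this Kempe-chain step rigorously --- identifying the correct component and verifying both that the recoloring remains a partitioned coloring and that the partition sum strictly decreases --- with the low-low edge $wz$ serving to link the two color classes through the cliques $A_i$ and $A_j$.
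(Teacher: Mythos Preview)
Your setup is correct: $A_i$ and $A_j$ are $r_i$- and $r_j$-cliques, and swapping $x$ with any low $v \in A_i$ yields another minimal partitioned coloring in which $N(v) \cap U_j$ is again an $r_j$-clique. The gap is the final step, where you reach for bichromatic Kempe chains in $G[L_{ik_v} \cup L_{jm}]$ to force $N(v) \cap U_j = A_j$. You yourself flag this as unexecuted, and in fact it is unnecessary.

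The paper avoids Kempe chains with a one-line connectivity and cardinality argument. Swap $x$ with $w$. By Lemma~\ref{mozlemma} the component $Z_j(w)$ of $w$ in $G[\{w\} \cup U_j]$ is a $K_{r_j+1}$. Since $z \in N(w) \cap U_j$, $Z_j(w)$ contains $z$, and hence contains the whole component of $z$ in $G[U_j]$; but that component already contains the $r_j$-clique $A_j = Z_j(x) - x$. Thus $\{w\} \cup A_j \subseteq Z_j(w)$, and comparing cardinalities ($1 + r_j$ on each side) gives equality, so $w$ is joined to all of $A_j$. Symmetrically $z$ is joined to all of $A_i$. Now bootstrap: every low $v \in A_i$ is adjacent to the low vertex $z \in A_j$, so rerunning the same argument with $v$ in place of $w$ and $z$ as the anchor gives $N(v) \cap U_j = A_j$; symmetrically for low vertices of $A_j$. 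The single low--low edge $wz$ serves only to start the bootstrap, after which every low vertex in $A_i \cup A_j$ has its own anchor. No per-color-class analysis and no Kempe recoloring are needed; the step you identified as the ``main obstacle'' simply does not arise.
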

\begin{proof}
Suppose $i \neq j$ and a low vertex $w \in U_i \cap N(x)$ is adjacent to a low vertex $z \in U_j \cap N(x)$.  Swap $x$ with $w$ to get a new minimal $\chi(G)$-coloring of $G$.  Since $w$ is low and adjacent to $z \in U_j \cap N(x)$, $w$ is joined to $U_j \cap N(x)$ by Lemma \ref{mozlemma}.  Similarly $z$ is joined to $U_i \cap N(x)$.  But now every low vertex in $U_i \cap N(x)$ is adjacent to the low vertex $z \in U_j \cap N(x)$ and is hence joined to $U_j \cap N(x)$. Similarly, every low vertex in $U_j \cap N(x)$ is joined to $U_i \cap N(x)$.  Since both $U_i \cap N(x)$ and $U_j \cap N(x)$ induce cliques in $G$, the proof is complete.
\end{proof}

\begin{thm}\label{TheoremP}
Fix $k \geq 2$ and let $G$ be a vertex critical graph with $\chi(G) \geq \Delta(G) + 1 - k$.  If $\Delta(G) + 1 \geq 6k$ and $\mathcal{H}(G)$ is edgeless then $G = K_{\chi(G)}$.
\end{thm}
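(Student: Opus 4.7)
The plan is a proof by contradiction, so assume that $G$ is vertex critical, $\chi(G) \geq \Delta(G) + 1 - k$, $\Delta(G) + 1 \geq 6k$, $\mathcal{H}(G)$ is edgeless, and yet $G \neq K_{\chi(G)}$; write $\chi = \chi(G)$ and $\Delta = \Delta(G)$. The numerical hypotheses combine to give $\chi \geq 5k \geq 10$, so we may decompose $\chi - 1 = r_1 + r_2$ with $r_1, r_2 \geq 3$. Edgelessness of $\mathcal{H}(G)$ says the set of high vertices (those of degree at least $\chi$) is independent in $G$. If $G$ had no high vertex then $G$ would be $(\chi-1)$-regular with $\chi \geq 10$, and Brooks' Theorem would force $G = K_\chi$, contradicting the assumption; hence there is a high vertex $h$, and since $N(h)$ consists entirely of low vertices I may fix a low $x \in N(h)$.

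Next I apply Lemma \ref{mozlemma} to an $(r_1, r_2)$-partitioned $\chi$-coloring $\pi$ of $G$ with $\{x\}$ singleton, minimising $\size{G[U_1]} + \size{G[U_2]}$. Vertex-criticality together with the lowness of $x$ makes every non-singleton colour appear exactly once on $N(x)$, so $d_{U_i}(x) = r_i$ for both $i$. Lemma \ref{mozlemma} therefore makes each $Z_i(x)$ a clique of size $r_i + 1$, and the sets $T_i \DefinedAs Z_i(x) \setminus \{x\}$ are cliques of sizes $r_1, r_2$ partitioning $N(x)$. The goal is now to prove that $K \DefinedAs \{x\} \cup T_1 \cup T_2$ is a clique: then $|K| = \chi$ and vertex-criticality forces $G = K = K_\chi$, contradicting the hypothesis. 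The only possibly missing edges of $K$ run between $T_1$ and $T_2$.

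Because each $T_i$ is a clique and high vertices form an independent set, each $T_i$ contains at most one high vertex, so $x$ has at most two high neighbours and WLOG $h \in T_1$. Lemma \ref{JoinedLows} tells us that any low--low edge between $T_1$ and $T_2$ instantly joins every low pair across, which reduces the task to \textbf{(a)} producing at least one low--low cross edge and \textbf{(b)} forcing the at most two high vertices of $K$ to be universal in $K$. For \textbf{(a)}, swap $x$ with each low $w \in T_1 \setminus \{h\}$, of which there are at least $r_1 - 1 \geq 2$; Lemma \ref{mozlemma} applied at the low singleton $w$ in the resulting minimum coloring forces the $r_2$ neighbours of $w$ in $U_2$ to form an $r_2$-clique. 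If none of these cliques met $T_2$, each such $w$ would send its $r_2$ $U_2$-neighbours into $U_2 \setminus T_2$, and a count of these edges against the degree cap $\Delta \leq \chi + k - 1$ together with $\Delta + 1 \geq 6k$ and $r_2 \geq 3$ would yield a contradiction. For \textbf{(b)}, a parallel swap argument at the at most two high vertices of $K$, combined with the same degree slack and the ability to re-choose $x$ among the many low neighbours of any high vertex, forces each such high vertex adjacent to the rest of $K$. The technical heart of the argument is \textbf{(a)}, and the hypothesis $\Delta + 1 \geq 6k$ is calibrated precisely to close the counting there.
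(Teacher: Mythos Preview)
Your two-part decomposition is a genuine departure from the paper's approach, and the sketch has real gaps at both steps you flag as ``the technical heart.''

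\textbf{Step (a) does not close.} After swapping $x$ with a low $w\in T_1$, the $r_2$-clique $Z_2(w)\setminus\{w\}$ lives somewhere in $U_2$, but there is no mechanism forcing it to meet $T_2$. Your proposed edge count pits $(r_1-1)r_2$ edges from the low part of $T_1$ into $U_2\setminus T_2$ against a degree cap, but $U_2\setminus T_2$ is a union of colour classes and can be arbitrarily large, so no vertex is overloaded. Counting from the $T_2$ side fails for the same reason: each $v\in T_2$ has at least $r_1$ neighbours outside $\{x\}\cup T_2$, but nothing prevents all of them from lying in $U_1\setminus T_1$ or $U_2\setminus T_2$. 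With only two parts there is simply no pigeonhole that forces a low--low cross edge; this is exactly why the paper uses $a=k+1$ parts with $r_1,r_2\ge k+1$ and an iterative swapping algorithm that walks through the parts until some $Z_p$ has been revisited $k$ times, guaranteeing $k$ previously processed vertices inside a single clique neighbourhood and hence low--low edges into every other part via Lemma~\ref{JoinedLows}.

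\textbf{Step (b) cannot end by showing $K$ is a clique.} If $T_1$ and $T_2$ each contain a high vertex, those two high vertices are non-adjacent because $\mathcal{H}(G)$ is edgeless, so $K$ is \emph{not} complete and your intended contradiction (``$|K|=\chi$ and vertex-criticality forces $G=K_\chi$'') never fires. The paper does not try to make the neighbourhood a clique directly; instead, once the low vertices of $F=G[N(x_{t_1})\cup\{x_{t_1}\}]$ are known to be universal, it colours $G-F$ with $\chi-1$ colours and runs a list-colouring argument on $F$: any non-adjacent pair $w,z$ of high vertices satisfies $|L(w)|+|L(z)|\ge 2\Delta-6k\ge \chi$, so they share a colour, and the remaining greedy extension succeeds because each low vertex gains a repeated colour in its neighbourhood. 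That final list-colouring step is essential and absent from your outline.

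A smaller issue: you fix $x$ in advance (as a low neighbour of a chosen high $h$) and then minimise over partitioned colourings with that singleton. The proof of Lemma~\ref{mozlemma} swaps the singleton during the Kempe-chain step, so the minimisation must range over all singletons; the paper handles this by first taking a global minimum and then, if the singleton happens to be high, swapping it out for a low vertex.
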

\begin{proof}
Suppose that $\Delta(G) + 1 \geq 6k$ and $\mathcal{H}(G)$ is edgeless. Since $\Delta(G) + 1 \geq 6k$ we have $\chi(G) \geq 5k$ and thus we can find $r_1, \ldots, r_{k+1}$ such that $r_1, r_2 \geq k + 1$, $r_i \geq 3$ for each $i \geq 3$ and $\sum_{i = 1}^{k+1} r_i = \chi(G) - 1$.  Note that $r_i \geq 3$ for each $i$ since $k \geq 2$.

Put $a = k+1$. Of all $(r_1, r_2, \ldots, r_a)$-partitioned colorings of $G$, pick one (call it $\pi$) minimizing

\[\sum_{i = 1}^a \size{G\left[\bigcup_{j = 1}^{r_i} L_{ij}\right]}.\]

Remember that $\{x\}$ is a singleton color class in the coloring. Throughout the proof we refer to a coloring that minimizes the above function as a \emph{minimal} coloring. Put $U_i = \bigcup_{j = 1}^{r_i} L_{ij}$ and let $C_i = \pi(U_i)$ (the colors used on $U_i$).  For a minimal coloring $\gamma$ of $G$, let $Z_{\gamma, i}(x)$ be the component of $x$ in $G[\{x\} \cup \gamma^{-1}(C_i)]$.  Note that $Z_i(x) = Z_{\pi, i}(x)$.

First suppose $x$ is high. Since $a > k$ we have $1 \leq i \leq a$ such that $d_{Z_i(x)}(x) = r_i$.  Thus $Z_i(x)$ is complete.  Since $\mathcal{H}(G)$ is edgeless, each vertex in $Z_i(x) - x$ must be low.  Hence we can swap $x$ with a low vertex in $U_i$ to get another minimal $\chi(G)$ coloring. Thus we may assume that $x$ is low.  Consider the following algorithm.
\begin{enumerate}
\item Put $q_0(y) = 0$ for each $y \in V(G)$.
\item Put $x_0 = x$, $\pi_0 = \pi$, $p_0 = 1$ and $i = 0$.
\item Pick a low vertex $x_{i + 1} \in Z_{\pi_i, p_i}(x_i) - x_i$ minimizing $q_i(x_{i + 1})$. Swap $x_{i + 1}$ with $x_i$. Let $\pi_{i+1}$ be the resulting coloring.
\item If  there exists $d \in  \{3, \ldots, a\} - \{p_i\}$ with $\left|V(Z_{\pi_{i + 1}, d}(x_{i + 1})) \cap \bigcup_{j = 1}^i x_j\right| = 0$, then let $p_{i+1} = d$.  Otherwise pick $p_{i+1} \in \{1,2\} - \{p_i\}$.
\item Put $q_i(x_i) = q_i(x_{i+1}) + 1$.
\item Put $q_{i+1} = q_i$.
\item Put $i = i + 1$.
\item Goto (3).
\end{enumerate}  

Since $G$ is finite we have a smallest $t$ such that for $p = 1$ or $p = 2$ with $p \neq p_{t-1}$ we have $\left|\left\{y \in V(Z_{\pi_t, p}(x_t)) - \{x_t\} \mid q_t(y) = 1\right\}\right| = k$. Let $x_{t_1}, \ldots, x_{t_k}$  with $t_1 < t_2 \cdots < t_k$ be the vertices in $V(Z_{\pi_t, p}(x_t)) - \{x_t\}$ with $q_t(x_{t_j}) = 1$.

Swap $x_t$ with $x_{t_1}$ and note that $x_{t_1}$ is low and adjacent to each of $x_{t_1 + 1}, \ldots, x_{t_k + 1}$.  Also note that $\{x_{t_1 + 1}, \ldots, x_{t_k + 1}\}$ induces a clique in $G$ since all those vertices are in $U_p$. By the condition in step (4) we see that $\{p_{t_1 + 1}, p_{t_2 + 1}, \ldots, p_{t_k + 1}\} = \{1, \ldots, a\} - \{p\}$.  Thus the low vertices in $\bigcup_{i \neq p} \pi_t^{-1}(C_i) \cap N(x_{t_1})$ are universal in $G\left[\bigcup_{i \neq p} \pi_t^{-1}(C_i) \cap N(x_{t_1})\right]$ by Lemma \ref{JoinedLows}.  Also since $x_t$ is low and is joined to $\pi_t^{-1}(C_i)  \cap N(x_{t_1})$ for each $i \neq p$, again applying Lemma \ref{JoinedLows} we get that the low vertices in $N(x_{t_1}) \cup \{x_{t_1}\}$ are universal in $G[N(x_{t_1}) \cup \{x_{t_1}\}]$.

Put $F = G[N(x_{t_1}) \cup \{x_{t_1}\}]$ and let $S$ be the set of high vertices in $F$.  Note that $|F| = \chi(G)$ and $|S| \leq k + 1$ since $\mathcal{H}(G)$ is edgeless.  We will show that $F$ is complete.  It will be enough to show that $S$ is a clique.  Suppose we have non-adjacent $w, z \in S$.  Color $G - F$ with $\chi(G) - 1$ colors.  This leaves a list assignment $L$ on $F$ with $|L(v)| \geq d_F(v) - k$ for each $v \in V(F)$.  Thus $|L(w)| + |L(z)| \geq d_F(w) + d_F(z) - 2k \geq 2(|F| - |S|) - 2k \geq 2 (\Delta(G) - 2k) - 2k = 2\Delta(G) - 6k$.  Since $\Delta(G) + 1 \geq 6k$ and $k \geq 2$, we have $|L(w)| + |L(z)| \geq 2\Delta(G) - 6k \geq \Delta(G) + 1 - k$.  Hence we have $c \in L(w) \cap L(z)$.  Color both $w$ and $z$ with $c$ to get a new list assignment $L'$ on $F' = F - \{w, z\}$.  Put $A = G[S - \{w,z\}]$. Then we can complete the coloring to $A$ since for any $v \in V(A)$ we have $|L'(v)| \geq d_{F'}(v) - k \geq d_A(v) + |F| - |S| - k \geq d_A(v) + \Delta(G) - 3k \geq d_A(v) + 1$.  Let $J$ be the resulting list assignment on $B = F - S$.  Since the vertices in $B$ are all low and they each have a pair of neighbors that received the same color ($w$ and $z$) we have $|J(v)| \geq d_B(v) + 1$ for each $v \in V(B)$.  Hence we can complete the $\chi(G) - 1$ coloring to all of $F$.  This is a contradiction.  Hence $S$ is a clique and the theorem is proved.
\end{proof}

The $k = 1$ case was dealt with in \cite{DeltaCritical}.  The proof is similar but complicated by having to deal with odd cycles instead of just cliques. There the following was proved.

\begin{cor}\label{CorN}
$K_{\chi(G)}$ is the only critical graph $G$ with $\chi(G) \geq \Delta(G) \geq 6$ such that $\mathcal{H}(G)$ is edgeless.
\end{cor}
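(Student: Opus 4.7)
The plan is to adapt the proof of Theorem \ref{TheoremP} to $k=1$ and handle the small subcase separately. The hypothesis $\chi(G) \geq \Delta(G)$ yields two possibilities: either $\chi(G) = \Delta(G) + 1$, in which case Brooks' Theorem (applicable since $\Delta(G) \geq 6 \geq 3$) immediately gives $G = K_{\chi(G)}$ and we are done; or $\chi(G) = \Delta(G) =: \Delta \geq 6$, which I may thus assume. Criticality then forces $\delta(G) \geq \chi(G) - 1 = \Delta - 1$, so every vertex is either \emph{low} (degree $\Delta - 1$) or \emph{high} (degree $\Delta$), and edgelessness of $\mathcal{H}(G)$ says the high vertices form an independent set. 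The goal is to produce a $\chi(G)$-coloring of $G$, contradicting criticality (and hence the assumption $G \neq K_{\chi(G)}$).

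Following Theorem \ref{TheoremP} with $k = 1$, I would apply Lemma \ref{mozlemma} with $a = k+1 = 2$ and a partition $r_1 + r_2 = \chi(G) - 1 \geq 5$. When $\chi(G) \geq 7$, pick $r_1, r_2 \geq 3$; then Lemma \ref{mozlemma} guarantees that each $Z_i(x)$ with $d_{Z_i(x)}(x) = r_i$ is a clique, and the swap algorithm, the application of Lemma \ref{JoinedLows}, and the final list-coloring endgame on $F = G[N(x_{t_1}) \cup \{x_{t_1}\}]$ from the proof of Theorem \ref{TheoremP} all transport essentially verbatim. The numerics close under our hypothesis: with $k = 1$, $|S| \leq 2$, and $\Delta \geq 6$, one obtains $|L(w)| + |L(z)| \geq 2\Delta - 6 \geq \Delta$, which is enough to share a color between two non-adjacent high vertices of $F$, and $|L'(v)| \geq d_A(v) + \Delta - 3 \geq d_A(v) + 1$, which is enough to extend the coloring first over the high part of $F$ and then over the low part.

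The main obstacle is the remaining case $\chi(G) = 6$, which forces $\Delta = 6$ and the partition $(r_1, r_2) = (3, 2)$. Here Lemma \ref{mozlemma} permits $Z_2(x)$ to be an odd cycle rather than a clique, so a single swap inside $Z_2(x)$ no longer suffices to interchange $x$ with an arbitrary vertex of its color chain. I would repair this by replacing each such swap by a color \emph{rotation} around the odd cycle: rotating the two colors of $U_2$ around $Z_2(x)$ yields another minimal coloring, and composing these rotations with Kempe swaps inside the clique component $Z_1(x)$ recovers enough flexibility to mimic Lemma \ref{JoinedLows}. This costs one additional unit of degree slack in the counting, which is exactly why the hypothesis must read $\Delta \geq 6$ rather than the $\Delta \geq 5$ that $\Delta + 1 \geq 6k$ would naively give. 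The careful bookkeeping for this odd-cycle case is precisely what is carried out in \cite{DeltaCritical}; once it is in hand, the same $F$-endgame from the previous paragraph closes the contradiction.
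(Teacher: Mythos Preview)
The paper itself does not prove this corollary in-text; it simply quotes it from \cite{DeltaCritical}, remarking only that the $k=1$ argument is ``similar but complicated by having to deal with odd cycles instead of just cliques.'' Your proposal is therefore aligned with the paper's treatment: you correctly dispose of $\chi(G)=\Delta(G)+1$ via Brooks' Theorem, and for $\chi(G)=\Delta(G)$ you ultimately defer the delicate odd-cycle bookkeeping to \cite{DeltaCritical}, exactly as the paper does.

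Your additional observation---that for $\chi(G)\geq 7$ one may choose $r_1,r_2\geq 3$ so that only cliques arise in Lemma~\ref{mozlemma}, and that the list-coloring endgame of Theorem~\ref{TheoremP} then closes with $k=1$ and $\Delta\geq 6$---is correct and a worthwhile refinement; the numerics you check are right. Two small cautions. First, ``produce a $\chi(G)$-coloring of $G$, contradicting criticality'' is a slip: you mean a $(\chi(G)-1)$-coloring, or equivalently you aim to exhibit a $K_{\chi(G)}$ inside the critical graph $G$. Second, be careful with ``essentially verbatim'': when $a=k+1=2$ the set $\{3,\ldots,a\}$ in step~(4) of the algorithm is empty, so the walk simply alternates between $U_1$ and $U_2$, and the bookkeeping about which $x_{t_j+1}$ land in which $U_i$ is not literally the same as for $k\geq 2$. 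This is not a gap---the simpler alternating walk still produces a low vertex in each of $U_1\cap N(x_{t_1})$ and $U_2\cap N(x_{t_1})$ that are adjacent, which is all Lemma~\ref{JoinedLows} requires---but it does need a sentence of its own rather than a bare appeal to Theorem~\ref{TheoremP}.
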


\noindent Now the proof of the Main Theorem is almost immediate.

\begin{proof}[Proof of Main Theorem]
Suppose the theorem is false and choose a counterexample $G$ minimizing $\card{G}$.  Plainly, $G$ is vertex critical.  Let $k = \Delta(G) + 1 - \chi(G)$.  Note that $k \geq 1$ by Brooks' Theorem. Since $\chi(G) > \Delta_2(G)$, we know by our observation above that $\mathcal{H}(G)$ is edgeless.  Also, since $\chi(G) > \frac{5}{6}(\Delta(G) + 1)$ we have $\Delta(G) + 1 - k = \chi(G) \geq 5k + 1$.  If $k \geq 2$ we have a contradiction by Theorem \ref{TheoremP}.  If $k = 1$ we have a contradiction by Corollary \ref{CorN}. 
\end{proof}

\section{A simple construction}
Let $F_n$ be the graph formed from the disjoint union of $K_n - xy$ and $K_{n-1}$ by joining $\floor{\frac{n-1}{2}}$ vertices of the $K_{n-1}$ to $x$ and the other $\ceil{\frac{n-1}{2}}$ vertices of the $K_{n-1}$ to $y$.  It is easily verified that for $n \geq 4$ we have $\chi(F_n) = n > \omega(F_n)$, $\Delta(F_n) = \ceil{\frac{n-1}{2}} + n - 2$ and $\fancy{H}(G)$ is edgeless (and nonempty).  Moreover, $\Delta_\epsilon(F_n) = \floor{(1-\epsilon)(n-1) + \epsilon\parens{\ceil{\frac{n-1}{2}} + n - 2}} = \floor{n - 1 - \epsilon + \epsilon\ceil{\frac{n-1}{2}}}$.  For $0 < \epsilon \leq 1$, choose $n_\epsilon \in \mathbb{N}$ maximal such that $\ceil{\frac{n_\epsilon - 1}{2}} < 1 + \frac{1}{\epsilon}$.  Then $\Delta_\epsilon(F_{n_\epsilon}) = n_\epsilon - 1$.  Hence in Theorem \ref{TheoremR}, we must have $t_\epsilon \geq n_\epsilon$. By maximality, $n_\epsilon$ must be odd.  Thus

\[n_\epsilon = \begin{cases}
1 + \frac{2}{\epsilon} & \text{if } \frac{1}{\epsilon} \in \mathbb{N} \\
3 + 2\floor{\frac{1}{\epsilon}} & \text{if } \frac{1}{\epsilon} \not\in \mathbb{N}.
\end{cases}\]

\noindent In particular, $t_\epsilon \geq n_\epsilon \geq 1 + \frac{2}{\epsilon}$ for all $0 < \epsilon \leq 1$. Additionally, we see that $t_0$ does not exist; that is, the tempting thought is false.

\end{document}